\documentclass[reqno]{amsart}
\usepackage{hyperref}
\usepackage{amssymb}

\allowdisplaybreaks

\theoremstyle{plain}
\newtheorem{theorem}{Theorem}[section]

\theoremstyle{definition}

\theoremstyle{remark}
\newtheorem{remark}[theorem]{Remark}
\numberwithin{equation}{section}

\begin{document}
\title[Semilinear heat equations with nonlinear boundary conditions]
{Influence of variable coefficients on global existence of solutions of semilinear heat equations with nonlinear boundary conditions}

\author[A. Gladkov]{Alexander Gladkov}
\address{Alexander Gladkov \\ Department of Mechanics and Mathematics
\\ Belarusian State University \\  4  Nezavisimosti Avenue \\ 220030
Minsk, Belarus  and  Peoples' Friendship University of Russia (RUDN University) \\  6 Miklukho-Maklaya street \\  117198 Moscow,  Russian Federation}    \email{gladkoval@mail.ru }

\author[M. Guedda]{Mohammed~Guedda}
\address{Mohammed~Guedda \\ Universit\'{e} de Picardie,
LAMFA, CNRS, UMR 7352, 33 rue Saint-Leu, F-80039,  Amiens, France}
\email{mohamed.guedda@u-picardie.fr}

\subjclass{35B44, 35K58, 35K61}
\keywords{Semilinear parabolic equation, nonlinear boundary condition, finite time blow-up}

\begin{abstract}
We consider semilinear parabolic equations with nonlinear boundary conditions.
We give conditions which guarantee global existence of solutions as well as blow-up
in finite time of all solutions with nontrivial initial data. The results depend on the behavior
of variable coefficients as $t \to \infty.$
\end{abstract}

\maketitle


\section{Introduction}

 We investigate the global solvability and blow-up in finite time for
semilinear heat equation
\begin{equation}\label{1e}
   u_{t} = \Delta u + \alpha (t)  f(u)  \,\,\, \textrm{for} \,\,\,
x \in \Omega, \,\,\, t>0,
\end{equation}
with nonlinear boundary condition
\begin{equation}\label{1b}
\frac{\partial u(x,t)}{\partial\nu} = \beta (t)  g(u)  \,\,\, \textrm{for} \,\,\, x \in\partial\Omega, \,\,\,  t > 0,
\end{equation}
and initial datum
\begin{equation}\label{1i}
   u(x,0)= u_0(x)  \,\,\, \textrm{for} \,\,\,  x  \in \Omega,
\end{equation}
where $\Omega$ is a bounded domain in $\mathbb{R}^n$ for $n \geq
1$ with smooth boundary $\partial  \Omega,$ $\nu$ is the unit
exterior normal vector on the boundary $\partial\Omega.$
Here $f (u)$ and $g (u)$ are nonnegative continuous functions for $ u \geq  0,\,$
$\alpha (t)$  and $\beta (t)$ are nonnegative continuous functions for $ t \geq  0,\,$
$ u_0(x)\in C^1(\overline{\Omega}),\,$ $ u_0(x)\geq 0$ in $\overline\Omega$
and satisfies boundary condition (\ref{1b}) as $t=0.$ We will consider nonnegative
classical solutions of (\ref{1e})--(\ref{1i}).

Blow-up problem for parabolic equations with reaction term in general form
 were considered in many papers (see, for example,  \cite{FM} -- \cite{Y}  and the references therein).
  For the global existence and blow-up of solutions for linear parabolic equations
  with $\beta (t) \equiv 1$ in (\ref{1b}), we refer to previous studies \cite{LP} -- \cite{RR}.
  In particular, Walter \cite{W} proved that if $g (s)$ and $g' (s)$
  are continuous, positive and increasing for large $s,$
  a necessary and sufficient condition for global existence is
$$
\int^{+\infty}  \frac{ds}{g(s)g'(s)} = + \infty.
$$

Some papers are devoted to blow-up phenomena in parabolic problems with time-dependent coefficients
(see, for example,  \cite{M} -- \cite{ZL}).
So, it follows from results of Payne and Philippin \cite{PP}
blow-up of all nontrivial solutions for (\ref{1e})--(\ref{1i})
with $\beta (t) \equiv 0$ under the conditions (\ref{211}) and
$$
f(s) \geq z(s) > 0, \,\, s > 0,
$$
where $z$ satisfies
\begin{equation*}
\int_a^{+\infty}  \frac{ds}{z(s)} < + \infty \,\,\, \textrm{for any} \,\,\, a > 0
\end{equation*}
and  Jensen's inequality
\begin{equation}\label{Re2}
\frac{1}{|\Omega|}\int_{\Omega}  z(u) \, dx  \geq  z \left( \frac{1}{|\Omega|}\int_{\Omega}  u \, dx \right).
\end{equation}
In (\ref{Re2}), $|\Omega| $ is the volume of $\Omega.$

The aim of our paper is study the influence of variable coefficients $\alpha (t)$ and $\beta (t)$ on the
 global existence and blow-up of classical solutions of (\ref{1e})--(\ref{1i}).

 This paper is organized as follows.
 Finite time blow-up of all nontrivial solutions is proved in Section 2.
In Section 3, we present the global existence of solutions for small initial data.


\section{Finite time blow-up}\label{FT}

In this section, we give conditions for blow-up in finite time of all nontrivial solutions of (\ref{1e}) -- (\ref{1i}).

Before giving our main results, we state a comparison principle which has been proved in \cite{F}, \cite{Walter} for more general problems.
Let $Q_T=\Omega\times(0,T),\;S_T=\partial\Omega\times(0,T),$
$\Gamma_T=S_T\cup\overline\Omega\times\{0\}$, $T>0.$
\begin{theorem}\label{comp-prins}
     Let $v(x,t), w(x,t) \in C^{2,1}(Q_T)\cap C^{1,0}(Q_T\cup\Gamma_T)$  satisfy the inequalities:
\begin{equation*}\label{}
   v_{t} - \Delta v - \alpha (t)  f(v) <  w_{t} - \Delta w - \alpha (t)  f(w)   \,\,\, \textrm{in} \,\,\, Q_T,
\end{equation*}
\begin{equation*}\label{c1}
\frac{\partial v(x,t)}{\partial\nu} - \beta (t)  g(v) <  \frac{\partial w(x,t)}{\partial\nu} - \beta (t)  g(w)   \,\,\, \textrm{on} \,\,\, S_T,
\end{equation*}
\begin{equation*}\label{}
   v(x,0) <  w(x,0)  \,\,\, \textrm{in} \,\,\, \overline{\Omega}.
\end{equation*}
Then
\begin{equation*}\label{}
   v(x,t) <  w(x,t)  \,\,\, \textrm{in} \,\,\, Q_T.
\end{equation*}
\end{theorem}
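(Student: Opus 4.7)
The plan is to define $z(x,t) = w(x,t) - v(x,t)$ and prove $z > 0$ on $Q_T$ by a first-touching-time argument. From the initial hypothesis and continuity of $z$ on the compact set $\overline{\Omega}$, there is $\delta > 0$ with $z > 0$ on $\overline{\Omega}\times[0,\delta]$. Assuming the conclusion fails, I would set
$$
t_0 = \inf\{\, t \in (0,T) : z(x,t) \leq 0 \text{ for some } x \in \overline{\Omega}\,\},
$$
so that $t_0 \geq \delta$ and, by continuity, there exists $x_0 \in \overline{\Omega}$ with $z(x_0,t_0) = 0$ while $z > 0$ on $\overline{\Omega}\times[0,t_0)$. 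The goal is then to derive a contradiction in each of the two cases $x_0 \in \Omega$ and $x_0 \in \partial\Omega$.

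In the interior case, $(x_0,t_0)$ is a local minimum of $z$ on $\overline{\Omega}\times[0,t_0]$, so standard calculus gives $z_t(x_0,t_0) \leq 0$ and $\Delta z(x_0,t_0) \geq 0$. Since $z(x_0,t_0) = 0$ forces $v(x_0,t_0) = w(x_0,t_0)$, and therefore $f(v) = f(w)$ at that point, the strict parabolic inequality from the hypothesis reads $z_t(x_0,t_0) - \Delta z(x_0,t_0) > 0$, contradicting $z_t - \Delta z \leq 0$. This step is clean and uses only the strictness of the differential inequality.

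In the boundary case $x_0 \in \partial\Omega$, $z$ attains a zero minimum at a boundary point while being strictly positive in the parabolic interior of a neighborhood. The plan is to apply the parabolic Hopf boundary point lemma to $z$, which satisfies an inequality of the form $z_t - \Delta z - c(x,t)z \geq 0$ near $(x_0,t_0)$ after writing $f(w) - f(v)$ in quotient form on the locally compact range of $v,w$; one obtains $\partial z / \partial\nu(x_0,t_0) < 0$. On the other hand, $v(x_0,t_0) = w(x_0,t_0)$ yields $g(v) = g(w)$ there, and the strict boundary hypothesis gives $\partial z / \partial\nu(x_0,t_0) > 0$. This contradiction completes the proof.

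The main obstacle is making the boundary step rigorous when $f$ and $g$ are only continuous rather than Lipschitz, so that a bounded coefficient $c(x,t)$ is not directly available. I would handle this either by invoking the more general comparison frameworks of \cite{F}, \cite{Walter} cited in the theorem, or by first performing the exponential substitution $z = e^{Kt}\varphi$ with $K$ large enough to absorb a rough local linearization of the $f$ term into a sign-definite zeroth-order coefficient, which reduces the boundary step to the textbook version of Hopf's lemma.
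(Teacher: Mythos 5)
The paper does not prove this theorem at all: it is quoted with a pointer to Friedman \cite{F} and Walter \cite{Walter}, so your proof has to stand on its own. Your first-touching-time strategy is the standard one (and is essentially what Walter's book does), and the interior case is handled correctly: at the first touch point $f(v)=f(w)$, so the strict parabolic inequality collides with $z_t\leq 0$, $\Delta z\geq 0$.

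The one place you go astray is the boundary case, where you reach for the parabolic Hopf lemma and then correctly worry that it is out of reach: $f$ and $g$ are only assumed continuous, so no bounded coefficient $c(x,t)$ exists, and your proposed exponential substitution does not repair this, since absorbing the zeroth-order term into $e^{Kt}$ still requires a finite local Lipschitz (or at least one-sided linear) bound on $f$. But Hopf's lemma is not needed, because the theorem only asks you to contradict a \emph{strict} boundary inequality, for which the non-strict first-order information at a minimum already suffices. Concretely: at the first touch time $t_0$ you have $z(\cdot,t_0)\geq 0$ on $\overline{\Omega}$ with $z(x_0,t_0)=0$ at $x_0\in\partial\Omega$, hence
\begin{equation*}
\frac{\partial z}{\partial\nu}(x_0,t_0)=\lim_{h\to 0^+}\frac{z(x_0,t_0)-z(x_0-h\nu,t_0)}{h}\leq 0,
\end{equation*}
while $v(x_0,t_0)=w(x_0,t_0)$ gives $g(v)=g(w)$ there, so the strict boundary hypothesis forces $\partial z/\partial\nu(x_0,t_0)>0$. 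That is the contradiction, with no regularity of $f$ or $g$ beyond their being functions, and no linearization anywhere. With this replacement your argument is complete and elementary; as written, the boundary step rests on a lemma whose hypotheses you cannot verify.
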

The first our blow-up result is the following.
\begin{theorem}\label{Th1}
Let $g(s)$ be a nondecreasing positive function for $s >0$ such that
\begin{equation}\label{01}
\int^{+\infty}  \frac{ds}{g(s)} < + \infty
\end{equation}
and
\begin{equation}\label{1}
\int_0^{+\infty}  \beta (t)  \,dt = +\infty.
\end{equation}
Then any nontrivial nonnegative solution of (\ref{1e}) -- (\ref{1i}) blows up in finite time.
\end{theorem}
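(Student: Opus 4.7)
The plan is to monitor the functional
\[
I(t) := \int_\Omega G(u(x,t))\,dx, \qquad G(s) := \int_s^{+\infty}\frac{dr}{g(r)},
\]
which by (\ref{01}) is well-defined, positive, decreasing, and tends to $0$ at $+\infty$. The argument has three steps: (i) secure a uniform positive lower bound on $u$ so that $I$ is bounded, (ii) produce a differential inequality of the form $I'(t)\leq -\beta(t)|\partial\Omega|$, (iii) contradict $I\geq 0$ using (\ref{1}).

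For (i), since $\alpha(t)f(u)\geq 0$ and $\beta(t)g(u)\geq 0$, the strong maximum principle together with Hopf's lemma applied to the nontrivial $u_0$ gives $u(x,t_0)>0$ on $\overline{\Omega}$ for some $t_0>0$; setting $m_0=\min_{\overline{\Omega}}u(\cdot,t_0)>0$, the constant $v\equiv m_0$ is a subsolution of (\ref{1e})--(\ref{1b}), so a weak-inequality version of Theorem~\ref{comp-prins} (obtained by the standard perturbation $m_0\mapsto m_0-\varepsilon$ or $m_0 e^{-\varepsilon t}$) yields $u\geq m_0$ on $\overline{\Omega}\times[t_0,T_{\max})$. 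Consequently $0\leq I(t)\leq|\Omega|\,G(m_0)<+\infty$ throughout the existence interval.

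For (ii), differentiating $I$ with $G'(s)=-1/g(s)$, using (\ref{1e}), and integrating by parts,
\[
I'(t)=\int_{\partial\Omega}G'(u)\,\partial_\nu u\,dS-\int_\Omega G''(u)|\nabla u|^2\,dx-\alpha(t)\int_\Omega\frac{f(u)}{g(u)}\,dx.
\]
The boundary condition (\ref{1b}) collapses $G'(u)\partial_\nu u$ to the constant $-\beta(t)$, so the first term equals $-\beta(t)|\partial\Omega|$; the second term is nonpositive because $G''(u)=g'(u)/g(u)^2\geq 0$ by the monotonicity of $g$, and the third term is $\leq 0$. Discarding the nonpositive contributions yields $I'(t)\leq-\beta(t)|\partial\Omega|$. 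For (iii), integrating from $t_0$ to $t$ gives
\[
0\leq I(t)\leq I(t_0)-|\partial\Omega|\int_{t_0}^{t}\beta(s)\,ds,
\]
and (\ref{1}) (together with continuity of $\beta$ on $[0,t_0]$) drives the right-hand side to $-\infty$. Hence $T_{\max}<+\infty$ and, by the standard continuation criterion for (\ref{1e})--(\ref{1i}), $u$ blows up in finite time.

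The main technical obstacle is a mild regularity issue: the identity for $G''$ uses $g\in C^1$, whereas only continuity and monotonicity are assumed. I would bypass this by approximating $g$ from below by smooth nondecreasing $g_\varepsilon$ still satisfying (\ref{01}), running the argument above for the approximate problems, and passing to the limit through Theorem~\ref{comp-prins}; equivalently, one may record that on $[m_0,+\infty)$ the function $-1/g$ is nondecreasing, so its distributional derivative is a nonnegative measure, which is still enough to justify the sign of the bulk term rigorously.
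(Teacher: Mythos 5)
Your proposal is correct in substance, and it takes a noticeably more direct route than the paper while being built on the same Lyapunov functional. The paper never differentiates $\int_\Omega\int_u^{+\infty}ds/h(s)\,dx$ along $u$ itself: it first constructs an auxiliary strict subsolution $v$ of the perturbed problem (\ref{1123}) (with a small sink $\xi(t)$ in the equation, a small correction $\gamma(t)$ on the boundary, and $g$ replaced by a $C^1$ nondecreasing minorant $h\le g$), uses the Neumann Green's function to show $v>\sigma$, proves that $m(t)=\int_\Omega\int_{v}^{+\infty}ds/h(s)\,dx$ obeys the analogue of your inequality (\ref{m}) and hence that $v$ blows up, and only then transfers blow-up to $u$ via the strict comparison principle (Theorem~\ref{comp-prins}). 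You instead apply the functional directly to $u$, which eliminates the auxiliary problem, the Green's function estimates, and the choice of $\xi,\gamma$ altogether; the comparison principle is needed only for your step (i), the uniform lower bound $u\ge m_0$ on $[t_0,T_{\max})$. The price of the direct attack is exactly the two technicalities you flag, and both have clean resolutions already present in the paper: (a) for the regularity of $g$, the simplest repair is not to smooth the boundary nonlinearity in the PDE (which would require comparing solutions of perturbed problems) but to replace $g$ by a $C^1$ nondecreasing minorant $h\le g$ \emph{only inside the functional}, i.e.\ take $G(s)=\int_s^{+\infty}dr/h(r)$; then the boundary term becomes $-\beta(t)\int_{\partial\Omega}g(u)/h(u)\,dS\le-\beta(t)|\partial\Omega|$ and all signs survive — this $h$ is precisely the paper's auxiliary function; (b) for the lower bound, note that the constant perturbation $m_0-\varepsilon$ alone does not produce the \emph{strict} boundary inequality required by Theorem~\ref{comp-prins} at times where $\beta(t)=0$, so you should use the time-decaying perturbation together with the non-strict-boundary comparison principle Theorem~\ref{comp2} (whose hypotheses $\partial_\nu v=0\le\beta(t)g(u)=\partial_\nu u$ and the $\delta$-margin in the interior are exactly what $(m_0-\varepsilon)e^{-\varepsilon(t-t_0)}$ satisfies). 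With these two adjustments your argument is complete and, in my view, shorter than the published one; what the paper's detour buys is that the integration by parts is performed on a solution of an explicitly posed auxiliary problem rather than on $u$, and that only the strict comparison principle is invoked at the final step.
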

\begin{proof}
We suppose that $u(x,t)$ is a nontrivial nonnegative solution which exists in $Q_T$ for any positive $T.$
Then for some $T>0$ there exists $(\overline x, \overline t) \in Q_T$ such that $u(\overline x, \overline t) > 0.$
Since $u_t - \Delta u = \alpha (t) f(u) \geq 0,$
by strong maximum principle $u(x,t) > 0$ in $Q_T \setminus \overline {Q_{\overline t}}.$
Let $u(x_\star,t_\star)=0$ in some point $(x_\star,t_\star) \in S_T \setminus \overline {S_{\overline t}}.$
According to Theorem~3.6 of~\cite{Hu} it yields $\partial u(x_\star,t_\star)/ \partial \nu < 0,$
which contradicts the boundary condition (\ref{1b}). Thus, $u(x,t) > 0$ in $Q_T \cup S_T \setminus \overline {Q_{\overline t}}.$
Then there exists $t_0 > \overline t$ such that $\beta (t_0) > 0$ and
\begin{equation}\label{t0}
 \min_{\overline\Omega} u(x,t_0) > 2 \sigma,
\end{equation}
where $\sigma$ is a positive constant.

Let $G_N(x,y;t-\tau)$ denote the Green's function for the heat equation
given by
$$
u_t - \Delta u =0 \,\,\, \textrm{for}\,\,\, x \in \Omega, \, t>0
$$
with homogeneous Neumann boundary condition.
We note that the Green's function has the following properties (see, for
example, \cite{Kahane}, \cite{HY}):
    \begin{equation}\label{G1}
G_N(x,y;t-\tau) \geq 0, \; x,y \in \Omega, \; 0 \leq \tau < t,
    \end{equation}
    \begin{equation}\label{G2}
\int_{\Omega}{G_N(x,y;t-\tau)} \, dy=1, \; x\in\Omega, \; 0 \leq \tau < t,
    \end{equation}
     \begin{equation}\label{G3}
        G_N(x,y;t-\tau)\geq c_1, \;  x,\,y\in\overline\Omega,\; t-\tau \geq   \varepsilon,
    \end{equation}
 \begin{equation*}\label{G4}
        |G_N(x,y;t-\tau)-1/|\Omega||\leq c_2 \exp[-c_3 (t-\tau)],\; x,\,y\in\overline\Omega,\; t-\tau \geq \varepsilon,
    \end{equation*}
\begin{equation*}\label{G5}
\int_{\partial\Omega}G_N(x,y;t-\tau)\,dS_y \leq
\frac{c_4}{\sqrt{t-\tau}}, \; x\in\overline\Omega, \; 0< t-\tau \leq
\varepsilon,
\end{equation*}
for some small $\varepsilon>0.$ Here by $c_i\,(i\in \mathbb N)$ we denote
positive constants.

Now we introduce several auxiliary functions.
We suppose that $h(s) \in C^1((0, +\infty))\cap C([0, +\infty)),$ $h(s) >0$ for
$s >0,\,$  $h'(s) \geq 0$ for $s >0,\,$  $g(s) \geq h(s)$ and
\begin{equation*}\label{121}
\int^{+\infty}  \frac{ds}{h(s)} < + \infty.
\end{equation*}
Let $\xi (t)$ be a positive continuous function for $t \geq t_0$ such that
\begin{equation}\label{122}
\int^{+\infty}_{t_0}  \xi (t) \, dt  < \frac{ \sigma}{2}
\end{equation}
and $\gamma (t)$ be a positive continuous function for $t \geq t_0$ such that
$\gamma (t_0) = \beta (t_0) h(2\sigma)$ and
\begin{equation}\label{123}
\int^{t}_{t_0}  \gamma (\tau) \int_{\partial\Omega}G_N(x,y;t-\tau)\,dS_y \, d\tau  < \frac{ \sigma}{2}
\,\,\, \textrm{for}\,\,\,  x \in \overline\Omega, \; t \geq t_0.
\end{equation}

We consider the following problem
\begin{equation}\label{1123}
\left\{
  \begin{array}{ll}
     v_{t} = \Delta v - \xi (t)  \,\,\, \textrm{for} \,\,\,
x \in \Omega, \,\,\, t>t_0, \\
\frac{\partial v(x,t)}{\partial\nu} = \beta (t) h(v) - \gamma (t)
\,\,\, \textrm{for} \,\,\, x \in\partial\Omega, \,\,\,  t > t_0, \\
   v (x,t_0)= 2 \sigma  \,\,\, \textrm{for} \,\,\,  x  \in \Omega.
  \end{array}
\right.
\end{equation}
To find lower bound for $v(x,t)$ we represent (\ref{1123}) in equivalent form
    \begin{eqnarray}\label{le:equat}
v (x,t)&=& 2 \sigma  \int_\Omega G_N(x,y;t) \, dy -
\int_{t_0}^t  \int_\Omega  G_N(x,y;t-\tau)  \xi(\tau) \,dy \,d\tau \nonumber\\
&&+\int_{t_0}^t \int_{\partial\Omega} G_N(x,y;t-\tau) \left( \beta (\tau) h(v) - \gamma (\tau) \right)  \,dS_y \, d\tau.
    \end{eqnarray}
 Using (\ref{122}), (\ref{123}) and the properties of the Green's function (\ref{G1}), (\ref{G2}), we obtain from (\ref{le:equat})
\begin{equation}\label{v}
v (x,t) \geq  2 \sigma  - \int_{t_0}^t   \xi(\tau)  \, d\tau
- \int_{t_0}^t \gamma (\tau) \int_{\partial\Omega} G_N(x,y;t-\tau)  dS_y \, d\tau > \sigma.
\end{equation}

As in~\cite{RR} we put
\begin{equation*}\label{}
m(t) = \int_\Omega \int_{v(x,t)}^{+\infty}  \frac{ds}{h(s)} \, dx.
\end{equation*}
We observe that $m(t)$ is well defined and positive for $t \geq  t_0.$
Since $v(x,t)$ is the solution of (\ref{1123}), we get
 \begin{eqnarray*}
m'(t) &=& - \int_\Omega  \frac{v_t}{h(v)} \, dx = - \int_\Omega  \frac{\Delta v}{h(v)} \, dx
+ \xi (t) \int_\Omega  \frac{dx}{h(v)} \\
&=& - \int_\Omega div \left( \frac{\nabla v}{h(v)} \right) \, dx - \int_\Omega  \frac{h'(v) ||\nabla v||^2}{h^2(v)} \, dx
+ \xi (t) \int_\Omega  \frac{dx}{h(v)}.
    \end{eqnarray*}

Applying the inequality $h'(v) \geq 0,$ Gauss theorem, the boundary condition in (\ref{1123})
and (\ref{v}), we obtain for $t \geq  t_0$
\begin{equation}\label{m}
m'(t) \leq - \int_{\partial \Omega } \frac{1}{h(v)}  \frac{\partial v}{\partial \nu} \, dS
+ \xi (t) \frac{|\Omega|}{h(\sigma)}
 \leq - |\partial \Omega| \beta (t) +  \frac{|\Omega|\xi (t) + |\partial \Omega|\gamma (t)}{h(\sigma)}.
\end{equation}
Due to (\ref{1}), (\ref{G3}) -- (\ref{123})  $m(t)$ is negative for large values of $t.$
Hence $v(x,t)$ blows up in finite time $T_0.$ Applying Theorem~\ref{comp-prins} to $v(x,t)$  and $u(x,t)$
in $Q_T \setminus \overline{Q_{t_0}}$  for any $T \in  (t_0, T_0),$ we prove the theorem.
\end{proof}


\begin{remark}
If $u_0 (x)$ is positive in $\overline{\Omega}$ we can obtain an upper bound for blow-up time of the solution.
We put $t_0 =0$ and $v(x, 0) = u_0 (x) - \varepsilon$ in (\ref{1123}) for $\varepsilon \in (0, \min_{\overline{\Omega}} u_0 (x)).$
Integrating (\ref{m}) over $[0, T],$ we have
\begin{equation*}
m(t) \leq m(0) - |\partial \Omega| \int_{0}^{T} \beta (t) \, dt
+ \int_{0}^{T}  \frac{|\Omega|\xi (t) + |\partial \Omega|\gamma (t)}{h(\sigma)} \, dt.
\end{equation*}
Since $m(t)>0$ and $\varepsilon, \, \xi (t), \, \gamma (t)$ are arbitrary  we conclude that the solution of
(\ref{1e})--(\ref{1i}) blows up in finite time $T_b,$ where $T_b \leq T$ and
$$
\int_\Omega \int_{u_0(x)}^{+\infty}  \frac{ds}{h(s)} \, dx = |\partial \Omega| \int_{0}^{T} \beta (t) \, dt.
$$
 \end{remark}


\begin{remark}
We note that (\ref{1e})--(\ref{1i}) with $u_0 (x) \equiv 0$
may have trivial and blow-up solutions under the assumptions of Theorem~\ref{Th1}.
Indeed, let the conditions of Theorem~\ref{Th1} hold,
$\alpha (t) \equiv 0, \,$ $\beta (t) \equiv 1$
and $g(u) = u^p, $  $u \in [0, \gamma]$ for some $\gamma >0$ and $0<p<1.$
As it was proved in \cite{CR}, problem (\ref{1e})--(\ref{1i}) has trivial and positive for $t >0$
solutions and last one blows up in finite time by Theorem~\ref{Th1}.
 \end{remark}


To prove next blow-up result for (\ref{1e}) -- (\ref{1i}) we need
a comparison principle with unstrict inequality in the boundary condition.
\begin{theorem}\label{comp2}
     Let $\delta > 0$ and $v(x,t), w(x,t) \in C^{2,1}(Q_T)\cap C^{1,0}(Q_T\cup\Gamma_T)$  satisfy the inequalities:
\begin{equation*}\label{}
   v_{t} - \Delta v - \alpha (t)  f(v) + \delta <  w_{t} - \Delta w - \alpha (t)  f(w)   \,\,\, \textrm{in} \,\,\, Q_T,
\end{equation*}
\begin{equation*}\label{}
\frac{\partial v(x,t)}{\partial\nu}  \leq  \frac{\partial w(x,t)}{\partial\nu}   \,\,\, \textrm{on} \,\,\, S_T,
\end{equation*}
\begin{equation*}\label{}
   v(x,0) <  w(x,0)  \,\,\, \textrm{in} \,\,\, \overline{\Omega}.
\end{equation*}
Then
\begin{equation*}\label{}
   v(x,t) \leq  w(x,t)  \,\,\, \textrm{in} \,\,\, Q_T.
\end{equation*}
\end{theorem}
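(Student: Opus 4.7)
The plan is to show $z := w - v \ge 0$ on $Q_T$ via a first-contact argument. The key observation is that the strict slack $\delta > 0$ in the interior inequality is exactly what lets us rule out both interior and boundary contact points, even though the boundary inequality is only non-strict.

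Fix $T' \in (0, T)$ and work on the compact cylinder $\overline{\Omega} \times [0, T']$, where $v$ and $w$ are continuous. The function $z = w - v$ satisfies
\begin{equation*}
z_t - \Delta z \ge \alpha(t)\bigl[f(w) - f(v)\bigr] + \delta \,\,\, \textrm{in} \,\,\, Q_{T'},
\qquad
\frac{\partial z}{\partial \nu} \ge 0 \,\,\, \textrm{on} \,\,\, S_{T'},
\end{equation*}
together with $z(x, 0) > 0$ on $\overline{\Omega}$. If $z$ takes a negative value somewhere in $\overline{\Omega} \times [0, T']$, set
\begin{equation*}
t^{\ast} = \sup\{t \in [0, T'] : z(x,s) \ge 0 \,\,\, \textrm{for all} \,\,\, (x,s) \in \overline{\Omega} \times [0, t]\}.
\end{equation*}
Continuity together with compactness of $\overline{\Omega}$ and positivity of $z(\cdot, 0)$ yield $t^{\ast} > 0$, and there exists $x^{\ast} \in \overline{\Omega}$ with $z(x^{\ast}, t^{\ast}) = 0$ while $z \ge 0$ on $\overline{\Omega} \times [0, t^{\ast}]$. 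Once a contradiction is derived for every $T' < T$, letting $T' \to T$ gives $v \le w$ in $Q_T$.

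The interior case $x^{\ast} \in \Omega$ is immediate: $(x^{\ast}, t^{\ast})$ is a spacetime minimum of $z$ on $\overline{\Omega} \times [0, t^{\ast}]$, so $z_t(x^{\ast}, t^{\ast}) \le 0$ (one-sided in $t$) and $\Delta z(x^{\ast}, t^{\ast}) \ge 0$. Since $w(x^{\ast}, t^{\ast}) = v(x^{\ast}, t^{\ast})$, the term $f(w) - f(v)$ vanishes at this point, and the differential inequality forces $0 \ge z_t - \Delta z \ge \delta > 0$, a contradiction.

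The boundary case $x^{\ast} \in \partial\Omega$ is the main obstacle and requires Hopf's lemma. Continuity of $v, w, f, \alpha$ on the compact set $\overline{\Omega} \times [0, T']$, together with $f(w(x^{\ast}, t^{\ast})) = f(v(x^{\ast}, t^{\ast}))$, lets us pick a small parabolic neighborhood $N$ of $(x^{\ast}, t^{\ast})$ contained in $\overline{\Omega} \times (0, t^{\ast}]$ on which $|\alpha(t)[f(w) - f(v)]| < \delta/2$. Then $z_t - \Delta z > \delta/2 > 0$ in $N$, with $z \ge 0$ there and $z(x^{\ast}, t^{\ast}) = 0$. The strong parabolic maximum principle rules out $z \equiv 0$ on $N$ (which would contradict the strict inequality $z_t - \Delta z > \delta/2$) and so $z > 0$ in the interior of $N$. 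The parabolic Hopf lemma (Theorem~3.6 of \cite{Hu}) then gives $\partial z(x^{\ast}, t^{\ast}) / \partial \nu < 0$, contradicting $\partial z/\partial\nu \ge 0$ on $S_{T'}$. Thus no such contact point exists, and $z \ge 0$ throughout $\overline{\Omega} \times [0, T']$, completing the proof.
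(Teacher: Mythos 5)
Your proof is correct, but it takes a genuinely different route from the paper. The paper does not argue by contradiction at a first contact point: it perturbs the supersolution, setting $w_\varepsilon = w + \varepsilon\gamma(x)$ for a function $\gamma\in C^2(\overline\Omega)$ with $\partial\gamma/\partial\nu>0$ on $\partial\Omega$. This turns the non-strict boundary inequality into a strict one, at the price of the interior error terms $\varepsilon\Delta\gamma+\alpha(t)[f(w+\varepsilon\gamma)-f(w)]$, which the slack $\delta$ absorbs once $\varepsilon$ is small (using only continuity of $f$ and boundedness of $w$ on $\overline\Omega\times[0,\tau]$, $\tau<T$); the strict comparison principle (Theorem~\ref{comp-prins}, with $\beta\equiv 0$) then applies verbatim, and one lets $\varepsilon\to 0$, $\tau\to T$. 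What the paper's argument buys is economy: all maximum-principle machinery stays hidden inside the already-quoted Theorem~\ref{comp-prins}, and the role of $\delta$ (to pay for the perturbation) is transparent. What your argument buys is self-containedness and a direct view of where each hypothesis enters: $\delta$ kills interior contact points (where $f(w)-f(v)$ vanishes, so no Lipschitz condition on $f$ is needed), and the Hopf lemma (Theorem~3.6 of \cite{Hu}, which the paper itself invokes in the proof of Theorem~\ref{Th1}) kills boundary contact points. Your handling of the delicate points is sound: the first contact time satisfies $0<t^\ast<T'$, so the boundary inequality is available there; positivity of $z$ in the interior of the parabolic neighborhood $N$ follows from the same one-line computation as the interior case (you do not really need to invoke the strong maximum principle for this); and the normal derivative of $z$ at the contact point exists by the assumed $C^{1,0}$ regularity up to $\Gamma_T$. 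Both proofs ultimately rest on the smoothness of $\partial\Omega$ — yours through the interior ball condition for Hopf, the paper's through the existence of $\gamma$ with $\partial\gamma/\partial\nu>0$.
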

\begin{proof}
Let $\tau$  be any positive constant such that $\tau < T$ and a positive function $\gamma (x) \in C^2 (\overline\Omega)$ satisfy the following inequality
$$
\frac{\partial\gamma (x)}{\partial\nu} > 0  \,\,\, \textrm{on} \,\,\, \partial\Omega.
$$
For positive $\varepsilon$ we introduce
\begin{equation}\label{w}
  w_\varepsilon (x,t) =  w(x,t) + \varepsilon \gamma (x).
\end{equation}
Obviously,
\begin{equation*}\label{}
   v(x,0) <  w_\varepsilon (x,0)  \,\,\, \textrm{in} \,\,\, \overline{\Omega}, \quad
\frac{\partial v(x,t)}{\partial\nu}  <  \frac{\partial w_\varepsilon (x,t)}{\partial\nu}   \,\,\, \textrm{on} \,\,\, S_\tau.
\end{equation*}
Moreover,
\begin{equation*}\label{}
   v_{t} - \Delta v - \alpha (t)  f(v) <  w_{\varepsilon t} - \Delta w_\varepsilon  - \alpha (t)  f(w_\varepsilon)   \,\,\, \textrm{in} \,\,\, Q_\tau,
\end{equation*}
if we take $\varepsilon$ so small that
$$
\delta > \varepsilon \Delta \gamma + \alpha (t)  [ f(w + \varepsilon \gamma)  - f(w) ]  \,\,\, \textrm{in} \,\,\, Q_\tau.
$$
Applying Theorem~\ref{comp-prins} with $\beta (t) \equiv 0,$ we obtain
\begin{equation*}\label{}
   v(x,t) <  w_\varepsilon (x,t)  \,\,\, \textrm{in} \,\,\, Q_\tau.
\end{equation*}
Passing to the limits as $\varepsilon \to 0$ and $\tau \to T,$ we prove the theorem.
\end{proof}

\begin{theorem}\label{Th2}
Let  $f(s) >0$ for $s >0,$
\begin{equation}\label{21}
\int^{+\infty}  \frac{ds}{f(s)} <+ \infty
\end{equation}
and
\begin{equation}\label{211}
\int_0^{+\infty}  \alpha (t)  \,dt = +\infty.
\end{equation}
Then any nontrivial nonnegative solution of (\ref{1e}) -- (\ref{1i}) blows up in finite time.
\end{theorem}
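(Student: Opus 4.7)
The plan is to mirror the strategy of Theorem~\ref{Th1}, swapping the roles of the interior reaction $\alpha f$ and the boundary flux $\beta g$: the perturbation will now sit inside the equation rather than on the boundary. I would begin by copying the opening positivity argument of that proof verbatim --- applying the strong maximum principle to $u_t-\Delta u = \alpha(t)f(u) \geq 0$ together with Theorem~3.6 of~\cite{Hu} at any boundary point where $u$ would try to vanish --- to produce $t_0>0$ and $\sigma>0$ with $\min_{\overline\Omega} u(x,t_0) > 2\sigma$.

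Next I would introduce a nondecreasing $h \in C^1((0,+\infty))\cap C([0,+\infty))$ with $0 < h(s) \leq f(s)$ and $\int^{+\infty} ds/h(s) < +\infty$, along with small positive continuous $\xi(t)$ and $\gamma(t)$ for $t \geq t_0$ satisfying integrability conditions of the form (\ref{122}) and (\ref{123}), and consider the auxiliary problem
\begin{equation*}
\left\{\begin{array}{ll}
v_t = \Delta v + \alpha(t) h(v) - \xi(t), & \textrm{for } x \in \Omega,\; t > t_0,\\
\partial v/\partial\nu = -\gamma(t), & \textrm{for } x \in \partial\Omega,\; t > t_0,\\
v(x,t_0) = 2\sigma, & \textrm{for } x \in \Omega.
\end{array}\right.
\end{equation*}
Representing $v$ through the Neumann Green's function and using $\alpha(\tau)h(v)\geq 0$ to drop the reaction term, the same manipulation that produced (\ref{v}) yields $v(x,t)\geq\sigma$ on $\overline\Omega\times[t_0,+\infty)$ as long as $v$ exists.

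Blow-up of $v$ would then be extracted via the moment $m(t) = \int_\Omega \int_{v(x,t)}^{+\infty} ds/h(s)\,dx$. Differentiating, using the auxiliary equation, the divergence theorem, the boundary condition $\partial v/\partial\nu = -\gamma(t)$, and $h'\geq 0$ to discard the $|\nabla v|^2$ contribution, the interior term $\alpha(t)h(v)$ cancels against $h(v)$ in the denominator and leaves the driver $-|\Omega|\alpha(t)$ together with the two small remainders, in exact analogy with (\ref{m}):
\begin{equation*}
m'(t) \leq -|\Omega|\alpha(t) + \frac{|\Omega|\xi(t)+|\partial\Omega|\gamma(t)}{h(\sigma)}.
\end{equation*}
Integrating and using hypothesis~(\ref{211}) together with the finite integrals of $\xi$ and $\gamma$ forces $m(t)\to -\infty$, contradicting $m \geq 0$, so $v$ must blow up at some finite $T_0$.

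To conclude, I would verify the three strict inequalities of Theorem~\ref{comp-prins} between $v$ and $u$ on $Q_T\setminus\overline{Q_{t_0}}$ for $T\in(t_0,T_0)$: the PDE inequality is strict because $\alpha(t)(h(v)-f(v))-\xi(t) < 0$ using $h\leq f$ and $\xi>0$; the boundary inequality is strict because $-\gamma(t)-\beta(t)g(v) \leq -\gamma(t) < 0 = \partial u/\partial\nu - \beta(t)g(u)$; and the initial inequality is strict because $v(x,t_0)=2\sigma < u(x,t_0)$. Comparison then gives $v<u$, forcing $u$ to blow up no later than $T_0$. The main obstacle I foresee is the construction of the nondecreasing $C^1$ minorant $h$ of $f$ with $\int 1/h < +\infty$: this is immediate when $f$ is itself nondecreasing, but a general continuous $f$ satisfying only (\ref{21}) can display thin valleys that forbid such a minorant, so the argument as sketched may implicitly use a mild extra regularity assumption on $f$, or else fall back on the spatially constant subsolution $V(t)$ defined by the ODE $V'(t)=\alpha(t)f(V(t))$ with $V(t_0)=2\sigma$ (which blows up in finite time by (\ref{21}) and (\ref{211}) via an explicit integration) combined with a direct, non-strict parabolic comparison between $u$ and $V$.
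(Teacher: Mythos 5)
Your primary construction founders on exactly the point you flag at the end, and the obstruction is fatal rather than a matter of ``mild extra regularity'': Theorem~\ref{Th2} assumes only that $f$ is positive and continuous on $(0,+\infty)$ with $\int^{+\infty}ds/f(s)<+\infty$, and this is compatible with $\liminf_{s\to\infty}f(s)=0$ (take $f(s)=s^2$ except on intervals of width $2^{-k}k^{-2}$ near $s=k$ where it dips to $2^{-k}$; each valley contributes at most $k^{-2}$ to $\int ds/f$). Any nondecreasing minorant $h\le f$ must satisfy $h(s)\le\inf_{r\ge s}f(r)=0$, so no \emph{positive} nondecreasing minorant exists at all, let alone one with $\int^{+\infty}ds/h<+\infty$. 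Consequently the Green's-function representation and the moment $m(t)=\int_\Omega\int_{v}^{+\infty}ds/h(s)\,dx$ --- the whole machinery you import from Theorem~\ref{Th1}, where it is justified because $g$ is assumed nondecreasing --- is unavailable here, and the paper does not use it.

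Your fallback sentence is in substance the paper's actual proof, but the two places you pass over quickly are where the work lies. The paper does not take $V'=\alpha(t)f(V)$; it takes $v'=(\alpha(t)-\xi(t))f(v)$, $v(t_0)=2\sigma$, with $\xi>0$ chosen so that $\max_{[\sigma,2\sigma]}f(s)\cdot\int_{t_0}^{+\infty}\xi(t)\,dt<\sigma$. The subtracted term $\xi(t)f(v)$ manufactures the strict slack $\delta$ in the interior inequality required by the non-strict comparison principle (Theorem~\ref{comp2}), which must be proved separately because for a spatially constant subsolution the boundary inequality $\partial v/\partial\nu=0\le\partial u/\partial\nu=\beta(t)g(u)$ cannot be made strict; a ``direct, non-strict parabolic comparison'' with merely continuous (non-Lipschitz) $f$ is not available off the shelf. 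In turn, since $\alpha-\xi$ may be negative, one must verify that $v$ never drops to $\sigma$ --- the paper does this by a short contradiction argument exploiting the smallness condition on $\xi$ --- after which separation of variables gives $\int_{2\sigma}^{v(t)}ds/f(s)=\int_{t_0}^{t}(\alpha(\tau)-\xi(\tau))\,d\tau$, whose right-hand side tends to $+\infty$ by (\ref{211}) while the left-hand side is bounded by (\ref{21}); hence $v$ blows up at some finite $T_0$ and Theorem~\ref{comp2} transfers the blow-up to $u$. If you rewrite your fallback with the $\xi$-perturbation, the lower bound $v>\sigma$, and an explicit appeal to Theorem~\ref{comp2}, you recover the paper's argument.
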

\begin{proof}
We suppose that $u(x,t)$ is a nontrivial nonnegative solution which exists in $Q_T$ for any positive $T.$
In Theorem~\ref{Th1} we proved  (\ref{t0}).
Let $\xi (t)$ be a positive continuous function for $t \geq t_0$ such that
\begin{equation}\label{22}
\max_{[\sigma, 2 \sigma]} f(s) \int^{+\infty}_{t_0}  \xi (t) \, dt  <  \sigma.
\end{equation}
We consider the following auxiliary problem
\begin{equation}\label{23}
\left\{
  \begin{array}{ll}
    v'(t) = \alpha (t)  f(v) - \xi (t)  f(v)  , \; t>t_0, \\
    v(t_0) = 2 \sigma.
  \end{array}
\right.
\end{equation}
We prove at first that
\begin{equation}\label{24}
 v(t) > \sigma  \,\,\, \textrm{for} \,\,\, t \geq t_0.
\end{equation}
Suppose there exist $t_1$ and $t_2$ such that
$$
t_2 > t_1 \geq t_0, \; v(t_1) = 2 \sigma, \; v(t_2) = \sigma,
$$
and
$$
  v(t)  > \sigma \,\,\, \textrm{for} \,\,\, t \in [t_0, t_2)  \,\,\, \textrm{and} \,\,\,
  v(t)  \leq 2 \sigma \,\,\, \textrm{for} \,\,\, t \in [t_1, t_2].
$$
Integrating the equation in (\ref{23}) over $[t_1, t_2],$ we have due to (\ref{22})
$$
  v(t_2)  \geq - \max_{[\sigma, 2 \sigma]} f(s)  \int^{t_2}_{t_1}  \xi (t) \, dt + v(t_1) > \sigma.
$$
A contradiction proves (\ref{24}).

From (\ref{23}) we obtain
\begin{equation}\label{25}
\int^{v(t)}_{2\sigma}  \frac{ds}{f(s)} = \int_{t_0}^{t} [ \alpha (\tau) - \xi (\tau) ] \,d\tau.
\end{equation}
By (\ref{21}) -- (\ref{22}) the left side of (\ref{25}) is finite and the right side of (\ref{25}) tends to infinity as $t \to \infty.$
Hence the solution of (\ref{23}) blows up in finite time $T_0.$ Applying Theorem~\ref{comp2} to $v(t)$  and $u(x,t)$ in $Q_T \setminus \overline{Q_{t_0}}$  for any $T \in  (t_0, T_0),$ we prove the theorem.
\end{proof}


\begin{remark}
If $u_0 (x)$ is positive in $\overline{\Omega}$ we can obtain an upper bound for blow-up time of the solution.
Taking $t_0 =0,$  we conclude from (\ref{25}) that the solution of
(\ref{1e})--(\ref{1i}) blows up in finite time $T_b,$ where $T_b \leq T$ and
$$
\int_{\min_{\overline{\Omega}} u_0 (x)}^{+\infty}  \frac{ds}{f(s)} = \int_{0}^{T}  \alpha (t) \,dt.
$$
 \end{remark}


\begin{remark}\label{Rem11}
Theorem~\ref{Th2} does not hold if $f(s)$ is not positive for $s>0.$
To show this we suppose that $f(u_1) = 0$ for some $u_1 > 0,\,$ $\beta (t) \equiv 0,\,$ $u_0 (x) = u_1.$
Then problem (\ref{1e}) -- (\ref{1i}) has the solution $u (x,t) = u_1.$
\end{remark}


\begin{remark}\label{Rem2}
We note that (\ref{21}) is necessary condition for blow-up of solutions of (\ref{1e})--(\ref{1i}) with $\beta (t) \equiv 0.$
Let $f(s) >0$ for $s >0$ and
\begin{equation*}
\int^{+\infty}  \frac{ds}{f(s)} = + \infty.
\end{equation*}
Then any solution of (\ref{1e})--(\ref{1i}) is global. Indeed, let $u(x,t)$ be a nontrivial
solution of (\ref{1e})--(\ref{1i}). Then there exist $t_0 \geq 0$ and $x \in \Omega$ such that $u(x,t_0) > 0.$

We consider the following problem
\begin{equation}\label{231}
\left\{
  \begin{array}{ll}
    v'(t) = (\alpha (t) + \xi (t))  f(v)  , \; t>t_0, \\
    v(t_0) > \max_{\overline\Omega} u(x,t_0) > 0,
  \end{array}
\right.
\end{equation}
where $\xi (t)$ is some positive continuous function for $t \geq t_0.$
Obviously, $v (t)$ is global solution of (\ref{231}).
Applying Theorem~\ref{comp2} to $u(x,t)$  and $v(t)$
in $Q_T \setminus \overline{Q_{t_0}}$  for any $T > t_0,$ we prove the theorem.
\end{remark}

\begin{remark}
Problem (\ref{1e})--(\ref{1i}) with $u_0 (x) \equiv 0$
may have trivial and blow-up solutions under the assumptions of Theorem~\ref{Th2}. Indeed,
let the conditions of Theorem~\ref{Th2} hold,
 $\beta (t) \equiv 0, \,$ $f(s) $ be a nondecreasing H\"{o}lder continuous function
 on $[0, \epsilon]$ for some $\epsilon >0$ and
 $$
 \int_0^{\epsilon}  \frac{ds}{f(s)} < + \infty.
 $$
As it was proved in \cite{LRS}, problem (\ref{1e})--(\ref{1i}) has trivial and positive for $t >0$
solutions and last one blows up in finite time by Theorem~\ref{Th2}.
 \end{remark}



\section{Global existence }\label{GE}

To formulate global existence result for problem (\ref{1e})--(\ref{1i}) we suppose:
\begin{equation}\label{1.12}
f(s) \,\, \textrm{is a nonnegative locally H\"{o}lder continuous function for} \,\,   s \geq  0,
\end{equation}
\begin{equation}\label{1.12b}
\textrm{there exists} \,\, p > 0 \,\, \textrm{such that} \,\, f(s) \,\, \textrm{is a positive nondecreasing function for} \,\, s \in (0, p),
\end{equation}
\begin{equation}\label{1.12a}
\int_0  \frac{ds}{f(s)} = + \infty,  \,\,\,  \lim_{s \to 0} \frac{g(s)}{s} = 0,
\end{equation}
\begin{equation}\label{1.13}
\int_0^{+\infty} \left(  \alpha (t)  + \beta (t) \right) \,dt < +\infty
\end{equation}
and there exist positive constants $\gamma,\;t_0$ and $K$ such that $\gamma>t_0$ and
\begin{equation}\label{1.14}
    \int_{t-t_0}^t {\frac{\beta (\tau) d\tau}{\sqrt{t-\tau}}} \leq  K \, \textrm{ for } \, t \geq\gamma.
\end{equation}

\begin{theorem}\label{Th3}
Let  (\ref{1.12})--(\ref{1.14}) hold. Then problem (\ref{1e})--(\ref{1i})
has bounded global solutions for small initial data.
\end{theorem}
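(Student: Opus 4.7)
The plan is to establish an \emph{a priori} $L^\infty$-bound on the solution that holds for all $t>0$ whenever $\|u_0\|_{L^\infty(\Omega)}$ is sufficiently small. Combined with the standard fact that a local classical solution extends as long as it remains bounded, this yields both global existence and uniform boundedness. Set $M(t)=\sup\{u(x,\tau):x\in\overline\Omega,\,0\leq\tau\leq t\}$, which is nondecreasing wherever finite. The argument is a continuation / bootstrap: I will choose $R>0$ and show that the assumption $M(t)\leq R$ on an interval $[0,T]$ already forces the strict inequality $M(t)<R$ there, so by continuity $M(t)<R$ for every $t\geq 0$ and the local solution extends globally.

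The starting point is the Green's function representation
\begin{align*}
u(x,t) &= \int_\Omega G_N(x,y;t) u_0(y)\,dy + \int_0^t\!\int_\Omega G_N(x,y;t-\tau)\alpha(\tau) f(u(y,\tau))\,dy\,d\tau \\
&\quad + \int_0^t\!\int_{\partial\Omega} G_N(x,y;t-\tau) \beta(\tau) g(u(y,\tau))\,dS_y\,d\tau,
\end{align*}
standard for classical solutions of the problem. The normalization $\int_\Omega G_N\,dy=1$ bounds the first term by $\|u_0\|_\infty$. Choosing $R\leq p$ makes $f$ nondecreasing on $[0,R]$, so the volume term is at most $\int_0^t\alpha(\tau)f(M(\tau))\,d\tau$. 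For the boundary term, invoke $\lim_{s\to 0}g(s)/s=0$ to pick $\delta>0$ small and $\eta>0$ with $g(s)\leq\delta s$ on $[0,\eta]$, and shrink $R\leq\eta$; then $g(u(y,\tau))\leq\delta M(\tau)$. The surface integral of $G_N$ is singular as $\tau\to t$, so split $[0,t]$ at $t-t_0$: on $[0,t-t_0]$ the pointwise estimate $G_N\leq 1/|\Omega|+c_2 e^{-c_3(t-\tau)}$ combined with $\int_0^\infty\beta\,dt<\infty$ yields a uniform bound, and on $[t-t_0,t]$ the estimate $\int_{\partial\Omega}G_N\,dS_y\leq c_4/\sqrt{t-\tau}$ combined with $\int_{t-t_0}^t\beta(\tau)/\sqrt{t-\tau}\,d\tau\leq K$ does likewise (the initial regime $t<\gamma$ is handled by continuity of $\beta$). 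Together these give $\int_0^t\beta(\tau)\int_{\partial\Omega}G_N(x,y;t-\tau)\,dS_y\,d\tau\leq B^*$ for some constant $B^*$ independent of $x$ and $t$.

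Assembling the estimates produces the integral inequality
\begin{equation*}
M(t)\leq \|u_0\|_\infty+\int_0^t\alpha(\tau)f(M(\tau))\,d\tau+\delta B^* M(t).
\end{equation*}
Choosing $\delta$ with $\delta B^*\leq 1/2$ gives $M(t)\leq 2\|u_0\|_\infty+2\int_0^t\alpha(\tau)f(M(\tau))\,d\tau$. I then compare this with the ODE $v'(t)=2\alpha(t)f(v)$, $v(0)=2\|u_0\|_\infty$. Since $f$ is nondecreasing on $[0,R]$, a standard Gronwall-type argument (writing $\phi(t)$ for the right-hand side of the integral inequality and checking $\phi'\leq 2\alpha f(\phi)$) yields $M(t)\leq\phi(t)\leq v(t)$ as long as both remain in $[0,R]$. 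The ODE gives $\int_{v(0)}^{v(t)}ds/f(s)\leq 2\int_0^\infty\alpha\,d\tau$, and since $\int_0 ds/f(s)=+\infty$, taking $\|u_0\|_\infty$ small enough makes $\int_{v(0)}^R ds/f(s)>2\int_0^\infty\alpha\,d\tau$, forcing $v(t)<R$ for every $t\geq 0$. This makes $M(t)<R$ and closes the bootstrap.

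The main obstacle is the uniform-in-$t$ control of the boundary convolution in the presence of the $(t-\tau)^{-1/2}$ singularity of the surface kernel; this is precisely what the hypothesis $\int_{t-t_0}^t\beta(\tau)/\sqrt{t-\tau}\,d\tau\leq K$ is engineered to rule out, since even with $\beta\in L^1(0,\infty)$ a temporal concentration of $\beta$ could otherwise cause the boundary flux to be unbounded. Once this surface-convolution bound is in place, closing the bootstrap via the ODE comparison and the divergence $\int_0 ds/f(s)=+\infty$ is routine.
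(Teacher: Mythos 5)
Your argument is correct, but it follows a genuinely different route from the paper. The paper builds an explicit supersolution $\overline u(x,t)=\varepsilon z(t)y(x,t)$, where $y$ solves the auxiliary linear Neumann problem (\ref{vsp}) and is sandwiched $1\leq y\leq Y$ by Lemma~3.3 of \cite{GladkovKavitova} (this is exactly where (\ref{1.13})--(\ref{1.14}) enter), while $z$ solves a separable ODE driven by $\alpha+\eta$ and stays bounded because $\int_0 ds/f(s)=+\infty$ and $\int_0^\infty\alpha\,dt<\infty$; the comparison principle (Theorem~\ref{comp-prins}) then gives global existence for $u_0<\varepsilon$. You instead work directly on the solution via the Neumann Green's function representation and close a continuation argument: the uniform bound $B^*$ you derive on $\int_0^t\beta(\tau)\int_{\partial\Omega}G_N\,dS_y\,d\tau$ is precisely the content of the cited lemma, re-derived from the kernel estimates (\ref{G1})--(\ref{G3}) and their companions; the condition $g(s)=o(s)$ lets you absorb the boundary potential into $M(t)$, and the interior term is handled by the same ODE mechanism the paper uses for $z$. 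Both proofs therefore rest on the same two quantitative facts; yours is more self-contained (it does not invoke the nonlinear comparison principle and makes the role of each hypothesis explicit), while the paper's supersolution is shorter given the external lemma and produces an explicit barrier. Two small points to tighten: run the Gronwall step in integrated form, i.e.\ show $\int_{\phi(0)}^{\phi(t)}ds/f(s)\leq 2\int_0^t\alpha\,d\tau$ directly, since $f$ is only locally H\"older (not Lipschitz) and the differential comparison $\phi'\leq 2\alpha f(\phi)$ alone does not force $\phi\leq v$ without uniqueness for the ODE; and keep $\phi(0)>0$ (replace $2\|u_0\|_\infty$ by a small positive constant if $u_0\equiv 0$) so that $f(\phi)>0$ along the way. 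Also reconcile the splitting point $t-t_0$ with the range of validity $t-\tau\geq\varepsilon$ of the pointwise kernel bound; this is routine bookkeeping.
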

\begin{proof}
It is well known that problem (\ref{1e}) -- (\ref{1i}) has a local nonnegative classical solution $u(x,t).$
Let $y(x,t)$ be a solution of the following problem
\begin{equation}\label{vsp}
\left\{
  \begin{array}{ll}
    y_t = \Delta y, \; x\in\Omega, \; t>0, \\
    \frac{\partial y(x,t)}{\partial \nu} = \xi (t) + \beta (t), \; x \in\partial\Omega, \; t>0, \\
    y(x,0)= 1,\; x\in\Omega,
  \end{array}
\right.
\end{equation}
where $\xi (t)$ is a positive continuous function that satisfies (\ref{1.13}), (\ref{1.14}) with $\beta (t) = \xi (t).$
According to Lemma 3.3 of \cite{GladkovKavitova} there exists
a positive constant $Y$ such that
\begin{equation*}\label{}
1 \leq y(x,t) \leq Y, \,  x\in\Omega, \; t>0.
\end{equation*}
Due to (\ref{1.12b}), (\ref{1.12a}) for any $a \in (0, p),$ there exist $\varepsilon (a)$ and a positive continuous function $\eta (t)$ such that
\begin{equation*}\label{}
0 < \varepsilon (a) < \frac{a}{Y},   \,\,\, \int_0^\infty  \eta (t) \, dt < \infty  \,\,\, \textrm{and}
\,\,\, \int_{\varepsilon Y}^{a}  \frac{ds}{f(s)} > Y \int_0^{\infty} \left(  \alpha (t) + \eta (t) \right)   \, dt
\end{equation*}
for any $\varepsilon \in (0, \varepsilon (a)).$
Now for any $T >0$ we construct a positive supersolution of
(\ref{1e})--(\ref{1i}) in $Q_T$ in such a form that
\begin{equation*}\label{}
\overline u (x,t) = \varepsilon z(t) y(x,t),
\end{equation*}
where function $z(t)$  is defined in the following way
\begin{equation*}\label{}
\int_{\varepsilon Y}^{\varepsilon Y z(t)}  \frac{ds}{f(s)} = Y \int_0^t    \left(  \alpha (\tau) + \eta (\tau) \right) \, d\tau.
\end{equation*}
It is easy to see that $\varepsilon Y z(t) < a$ and  $z(t)$ is the solution of the following Cauchy problem
\begin{equation*}\label{}
z' (t) - \frac{1}{\varepsilon} \left(  \alpha (t) + \eta (t) \right)  f(\varepsilon Y z(t)) = 0, \,\,\,  z(0) = 1.
\end{equation*}
After simple computations it follows that
\begin{eqnarray*}
        \overline u_t - \Delta \overline u - \alpha (t)  f(\overline u)
& = &  \varepsilon z' y + \varepsilon z y_t - \varepsilon z \Delta y - \alpha (t) f(\varepsilon  z y)\\
        &\geq& \alpha (t) (f(\varepsilon Y z(t)) - f(\varepsilon  z y))  + \eta (t) f(\varepsilon Y z(t)) > 0, \;x\in\Omega, \; t>0,
    \end{eqnarray*}
and
    \begin{eqnarray*}
\frac{\partial \overline u(x,t)}{\partial\nu} - \beta (t)  g(\overline u) & = &
 \varepsilon z(t) (\xi (t) +  \beta (t) ) - \beta (t)  g(\varepsilon z(t) y(x,t)) \\
 &>&  \varepsilon z(t) \beta (t) \left[ 1- \frac{g(\varepsilon z(t) y(x,t))}{\varepsilon z(t) y(x,t)} y(x,t) \right] \geq 0
    \end{eqnarray*}
for small values of $a.$ Thus, by Theorem~\ref{comp-prins} there
exist bounded global solutions of (\ref{1e})--(\ref{1i}) for any initial data
satisfying the inequality
\begin{equation*}\label{}
u_0 (x) < \varepsilon.
\end{equation*}
\end{proof}

\begin{remark}
We suppose that $g(s)$ is a nondecreasing positive function for $s >0,$
$f(s) >0$ for $s >0$ and (\ref{01}), (\ref{21}) hold.
Then by Theorem~\ref{Th1} and Theorem~\ref{Th2}
(\ref{1.13}) is necessary for global existence of
solutions of (\ref{1e})--(\ref{1i}).

Let for any $a > 0$ $g(s) > \delta (a) > 0$  if $s > a.$
Then arguing in the same way as in the proof of
Lemma~3.3 of \cite{GladkovKavitova} it is easy to show
that (\ref{1.14}) is necessary for the existence of nontrivial
bounded global solutions of (\ref{1e})--(\ref{1i}).

 \end{remark}


\begin{thebibliography}{9999}


 \bibitem{FM}   Friedman A., McLeod J.B. Blow-up of positive solutions of semilinear heat equations.
Indiana Univ. Math. J. 34 (1985), 425-–447.

 \bibitem{BE}  Bebernes J. , Eberly D. Mathematical problems from combustion theory.
 Appl. Math. Sci. vol. 83, Springer-Verlag, New York, 1989.

 \bibitem{GV} Galaktionov V.A., Vazquez J.L. Necessary and sufficient conditions
 for complete blow-up and extinction for one-dimensional quasilinear heat equations.
Arch. Ration. Mech. Anal. 129 (1995), 225-–244.

 \bibitem{BB} Bandle C., Brunner H. Blowup in diffusion equations: A survey.
J. Comput. Appl. Math. 97 (1998), 3-–22.

\bibitem{LO} Lair A.V.,  Oxley M.E. A Necessary and sufficient condition for global
existence for a degenerate parabolic boundary value problem. J. Math. Anal. Appl. 221 (1998), 338--348.

\bibitem{PS} Payne L.E., Schaefer P.W. Lower bound for blow-up time in parabolic problems under
Neumann conditions. Appl. Anal. 85 (2006), 1301-–1311.

\bibitem{Y} Yanagida E. Blow-up of sign-changing solutions for a one-dimensional nonlinear diffusion equation.
Nonlinear Anal. 185 (2019), 193--205.




\bibitem{LP} Levine H.A., Payne L.E. Nonexistence theorems for the heat equation with nonlinear
boundary conditions and for the porous medium equation backward in time. J. Differ. Equ. 16 (1974), 319--334.

\bibitem{W}  Walter W. On existence and nonexistence in the large of solutions of
parabolic differential equations with a nonlinear boundary condition. SIAM J. Math. Anal. 6 (1975), 85--90.

\bibitem{LMW} Lopez-Gomez J., Marquez V., Wolanski N.  Blow up results and localization of blow up
points for the heat equation with a nonlinear boundary condition.
J. Differ. Equ. 92 (1991), 384--401.

\bibitem{WW} Wang M.X., Wu Y.H.  Global existence and blow-up problems for quasilinear parabolic
equations with nonlinear boundary conditions. SIAM J. Math. Anal. 24 (1993), 1515--1521.

\bibitem{Wol}  Wolanski N. Global behavior of positive solutions to nonlinear diffusion problems with nonlinear
absorption through the boundary. SIAM J. Math. Anal. 24 (1993), 317--326.

\bibitem{RR} Rial D.F., Rossi J.D. Blow-up results and localization of blow-up points in an N-dimensional smooth domain.
Duke Math. J. 88 (1997), 391--405.




 \bibitem{M}  Meier P. Blow-up of solutions of semilinear parabolic
differential equations. J. Appl. Math. Phys.
 39 (1988), 135--149.

\bibitem{PP} Payne L.E., Philippin G.A.
Blow-up phenomena in parabolic problems with time-dependent coefficients under
Neumann boundary conditions. Proc. R. Soc. Edinburgh Sect. A .
142 (2012),  625-–631.

\bibitem{MV} Marras M., Vernier Piro S.
Reaction-diffusion problems under non-local boundary conditions with blow-up solutions.
J. Inequal. Appl. 167 (2014).

\bibitem{FW} Fang Z., Wang Y.
Blow-up analysis for a semilinear parabolic equation with time-dependent coefficients under nonlinear boundary flux.
Z. Angew. Math. Phys. 66 (2015), 2525-–2541.

\bibitem{DS1} Ding J., Shen X.
Blow-up in p-Laplacian heat equations with nonlinear boundary conditions.
Z. Angew. Math. Phys.  67, 125 (2016).

\bibitem{DS2} Ding J., Shen X.
Blow-up analysis for a class of nonlinear reaction diffusion
equations with Robin boundary conditions. Math. Meth. Appl. Sci. 41 (2018), 1683-–1696.

\bibitem{ZL} Zhang J., Li F. Global existence and blow-up phenomena for divergence form parabolic equation with
time-dependent coefficient in multidimensional space.
Z. Angew. Math. Phys. 70,150  (2019).





\bibitem{F} Friedman A. Partial differential equations of parabolic
type. Prentice-Hall; 1964.

\bibitem{Walter} Walter W. Differential and Integral Inequalities, Ergebnisse der Mathematik Und ihrer Grenzgebiete, Band 55, Springer-Verlag, New York, Heidelberg, Berlin, 1970.
 \bibitem{Hu} Hu B. Blow-up theories for semilinear parabolic equations. Vol. 2018, Lecture Notes in Mathematics. Berlin: Springer-Verlag; 2011.




    \bibitem{Kahane} Kahane C.~S. On the asymptotic behavior of solutions of parabolic equations.
        Czechoslovak Math. J. 33 (1983),  262--285.

         \bibitem{HY} Hu B., Yin H.M. Critical exponents for a system of heat equations coupled in a non-linear boundary
condition. Math. Methods Appl. Sci. 19 (1996), 1099–-1120.


    \bibitem{CR} Cortazar C., Rossi J.D. Uniqueness and non-uniqueness for a system
of heat equations with nonlinear coupling at the boundary.
        Nonlinear Anal. 37 (1999), 257--267.


    \bibitem{LRS} Laistera R., Robinson J.C., Sier\.{z}\c{e}ga M.
    A necessary and sufficient condition for uniqueness of the trivial solution in semilinear parabolic equations.
        J. Differ. Equ.  262 (2017), 4979--4987.


 \bibitem{GladkovKavitova} Gladkov A., Kavitova T. Blow-up problem for semilinear heat equation with nonlinear
nonlocal boundary condition. Appl. Anal. 95 (2016), 1974--1988.

\end{thebibliography}
\end{document}